\newcommand{\pr}[1]{\mathrm{P} \left[ #1 \right]}
\newcommand{\arxiv}[1]{\href{http://arxiv.org/abs/#1}{\texttt{arXiv:#1}}}
\date{}
\title{\vspace{-0.8cm}Some Remarks on Rainbow Connectivity}
\author{
Nina Kam\v{c}ev \thanks{Department of Mathematics, ETH, 8092 Zurich. Email: nina.kamcev@math.ethz.ch.}
\and
Michael Krivelevich \thanks{School of Mathematical Sciences, Raymond and Beverly Sackler Faculty of Exact Sciences, Tel Aviv University,
Tel Aviv 6997801, Israel. Email: krivelev@post.tau.ac.il. Research supported in part by a USA-Israel BSF grant and by a grant from the Israel Science
Foundation.}
\and
Benny Sudakov \thanks{Department of Mathematics, ETH, 8092 Zurich.
Email: benjamin.sudakov@math.ethz.ch.
Research supported in part by SNSF grant 200021-149111.}
}
\theoremstyle{plain}
        \newtheorem{theorem}{Theorem}[section]
        \newtheorem{lemma}[theorem]{Lemma}
        \newtheorem{prop}[theorem]{Proposition}
\theoremstyle{definition}
        \newtheorem*{remark}{Remark}
\begin{document}

\maketitle
%!! Verstraete - vertex splitting
\begin{abstract}

An edge (vertex) coloured graph is rainbow-connected if there is a rainbow path between any two vertices, i.e.~a path all of whose edges (internal vertices) carry distinct colours. Rainbow edge (vertex) connectivity
of a graph $G$ is the smallest number of colours needed for a rainbow edge (vertex) colouring of $G$.
In this paper we propose a very simple approach to studying rainbow connectivity in graphs. Using this idea, we give a unified proof of several known results, as well as some new ones.
\end{abstract}

\section{Introduction}
        An edge colouring of a graph $G$ is \emph{rainbow} if there is a rainbow path between any two vertices, that is a path on which all edges have distinct colours. Any connected graph $G$ of order $n$ can be made rainbow-connected using $n-1$ colours by choosing a spanning tree and giving each edge of the spanning tree a different colour. Hence we can define the \emph{rainbow connectivity}, $rc(G)$, as the minimal number of colours needed for a rainbow colouring of $G$.

        Rainbow connectivity is introduced in 2008 by Chartrand et al. \cite{chartrand} as a way of strengthening the notion of connectivity, see for example \cite{caro}, \cite{chandran}, \cite{dudek}, \cite{frieze}, \cite{krivelevich}, and the survey \cite{li}. The concept has attracted a considerable amount of attention in recent years. It is also of interest in applied settings, such as securing sensitive information transfer and networking. For instance, \cite{chakra} describe the following setting in networking: we want to route messages in a cellular network such that each link on the route between two vertices is assigned with a distinct channel. Then, the minimum number of channels to use is equal to the rainbow connectivity of the underlying network.

        We are interested in upper bounds for rainbow connectivity, first studied by Caro et al. \cite{caro}. The trivial lower bound is $rc(G) \geq diam(G),$ and it turns out that for many classes of graphs, this is a reasonable guess for the value of rainbow connectivity. Caro et al. \cite{caro} showed that a connected graph of order $n$ and minimum degree $\delta \geq 3$ has rainbow connectivity at most $\frac{5n}{6}$. Since the diameter of such a  graph is at most $\frac{3n}{\delta +1}$ (see, e.g., \cite{erdos}), it is natural to ask whether the rainbow connectivity of $G$ is of the same order. Krivelevich and Yuster \cite{krivelevich} showed that indeed $rc(G) \leq \frac{20n}{\delta}$. Then Chandran et al.~\cite{chandran} settled this question by proving $rc(G) \leq \frac{3n}{\delta +1}+3$, which is asymptotically tight.

A random $r$-regular graph of order $n$ is a graph sampled from $G_{n,\,r}$, which denotes the uniform probability space of all $r$-regular graphs on $n$ labelled vertices. These graphs were extensively studied in the
last 30 years, see, e.g.,
\cite{wormald}. In this paper we consider $G_{n,\,r}$ for $r$ constant and $n \rightarrow \infty$. We say that an event holds \emph{with high probability} (whp) if its probability tends to
$1$ as $n$ tends to infinity, but only over
the values of $n$ for which $nr$ is even (so that $G_{n,\,r}$ is non-empty).

A random $r$-regular graph has quite strong connectivity properties, for example, the diameter of $G_{n,\, r}$ is whp asymptotic to $\frac{\log n}{\log (r-1)}$, see \cite{vega}. The natural
question of rainbow connectivity of random regular graphs was first studied by Frieze and Tsourakakis \cite{frieze}, who showed that whp $rc\left(G_{n,\, r} \right) = O\left( \log^{\phi_r} n \right)$, for a constant
$\phi_r > 2$. Dudek et al. \cite{dudek} improved this bound to $rc\left(G_{n,\, r} \right) = O(\log n)$
whp, which is the correct dependence on $n$. We will return to this result later.

        The aim of this note is to present a simple approach which immediately implies results on rainbow colouring of several classes of graphs. It provides a unified approach to various settings, yields new theorems, strengthens some of the earlier results and simplifies the proofs. It is based on edge- and vertex-splitting.

        The main idea of the edge-splitting lemma is simple: we decompose $G$ into two edge-disjoint spanning trees $T_1$ and $T_2$ with a common root vertex and small diameters. We use different palettes for edges of $T_1$ and $T_2$, ensuring that each tree contains a rainbow path from any vertex to the root. Hence if we can get the diameters of $T_1$ and $T_2$ `close' to the diameter of $G$ (say within a constant factor), then we have obtained a strong result.

        %a variant of this works for rvc
%        The rough idea of our edge-splitting lemma is as follows. Consider a tree $G_1$ with a root vertex $v$. If we explore the tree starting from $v$ and give each edge a colour according to its distance from $v$, we get a colouring which contains a rainbow path from every vertex to the root $v$. The number of colours used is at most $diam(G_1)$. Note that the same procedure can be done in any connected graph, ignoring the unnecessary edges. Then we put together two such edge-disjoint graphs $G_1$ and $G_2$ on the same vertex set, with the common `root' vertex $v$. The palettes used on edges of $G_1$ and $G_2$ are disjoint. For any pair of vertices $x$ and $y$, we decide to use edges of $G_1$ from $x$ to $v$, and the edges of $G_2$ from $v$ to $y$. The union of $G_1$ and $G_2$ is then rainbow coloured with at most $diam(G_1)+diam(G_2)$ colours.

        We exhibit a few applications of the lemma. First we use it to give a straightforward proof of the result of Krivelevich and Yuster \cite{krivelevich}, that is
        \begin{theorem} \label{mindeg}
            For a connected $n$-vertex graph $G$ of minimum degree $\delta \geq 4$, $\,rc(G) \leq \dfrac{16n}{\delta}$ .
        \end{theorem}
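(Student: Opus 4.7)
\emph{Plan.} I follow the edge-splitting framework sketched by the authors right before the theorem. The goal is to produce two edge-disjoint spanning subgraphs $H_1,H_2\subseteq G$ sharing a common root $r$, and to colour each $H_i$ with a palette of size $c_i$ in a hierarchical fashion so that every vertex of $V(G)$ has a rainbow path to $r$ in each $H_i$. Choosing the two palettes disjoint, any pair $u,v\in V(G)$ is then joined by the walk $u\to r\to v$ through $H_1\cup H_2$ with all edges of distinct colour, hence by a rainbow $u$--$v$ path; so $rc(G)\le c_1+c_2$.

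\emph{Construction.} First I build a ``core'' $W\subseteq V(G)$ of size $O(n/\delta)$ with $G[W]$ connected and $W$ $2$-dominating $V(G)$: pick a maximal set $S$ of vertices pairwise at distance $\ge 3$ in $G$; disjointness of the closed neighbourhoods $N[s]$ ($s\in S$), each of size $\ge \delta+1$, gives $|S|\le n/(\delta+1)$, while maximality gives $2$-domination. Then connect the $S$-vertices into a single connected set $W$ by adding the intermediate vertices of short paths between close pairs of $S$-vertices, inflating the size only by a bounded factor so that $|W|=O(n/\delta)$. Next build two edge-disjoint spanning trees $T_1,T_2$ of (a slight enlargement of) $G[W]$ rooted at a common $r\in W$, and colour each $T_i$ by assigning one colour per BFS-layer of $T_i$, using two disjoint palettes of size $\le |V(W)|-1$; this makes every vertex of $V(W)$ rainbow-reachable from $r$ in each $T_i$. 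Finally, for each $v\notin V(W)$, attach $v$ to $V(W)$ along a path of length at most $2$ using a small fixed number of additional ``entry'' colours. Balancing the constants gives a total of $2|V(W)|+O(1)\le 16n/\delta$.

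\emph{Main obstacle.} The delicate step is guaranteeing two edge-disjoint spanning trees of the core: the minimum-degree hypothesis $\delta\ge 4$ alone does not prevent $G[W]$ from having bridges, so it does not directly yield two edge-disjoint spanning trees (via Nash--Williams/Tutte one would need $G[W]$ to behave like a ``$4$-edge-connected'' graph). I would address this by enlarging $W$ by only a bounded factor, using the $\delta\ge 4$ hypothesis to add enough redundant edges at every potential bottleneck so that the enlarged core does admit a two-spanning-tree decomposition. The generous constant $16$ (compared with the $\sim 6n/\delta$ that a naive count suggests) is the slack that absorbs this enlargement, and $\delta\ge 4$ is exactly what makes the local redundancy available.
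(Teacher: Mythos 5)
There is a genuine gap, and it sits exactly at the point you flag as the ``main obstacle.'' Your framework requires two \emph{edge-disjoint} connected spanning structures (trees of an enlarged core reaching, directly or via entry paths, every vertex), coloured with disjoint palettes. But a graph of minimum degree $4$ can have a bridge --- take two copies of $K_5$ joined by a single edge $uv$ --- and then \emph{every} connected spanning subgraph must contain $uv$, so no two edge-disjoint ones exist, no matter how you enlarge the core: the redundancy you propose to ``add at every bottleneck'' is simply not present in $G$. This is not a technicality to be absorbed by the constant $16$; it is why the paper's splitting lemma (Lemma \ref{main}) is stated for two connected spanning subgraphs that are allowed to \emph{share} up to $c$ edges, with those $c$ shared edges individually coloured and paid for in the final count, and with an explicit argument for rerouting a path through the first shared edge when the two halves are not edge-disjoint. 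Your plan has no such escape hatch, and without it the construction cannot be completed.

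Beyond that, the construction itself is not carried out. The paper's actual split is quite different from yours: it orders the edges of $G$ (after adding an auxiliary matching on the odd-degree vertices) along an Eulerian circuit and assigns them alternately to $F_1$ and $F_2$, so each spanning subgraph $H_i=(V,F_i)$ has minimum degree at least $\frac{\delta-1}{2}$, hence at most $\frac{2n}{\delta}$ components; each is then made connected by adding at most $\frac{2n}{\delta}$ edges of $G$ (these are the shared edges), and the Erd\H{o}s--Pach--Pollack--Tuza bound gives diameter at most $\frac{6n}{\delta}$ for each, yielding $\frac{6n}{\delta}+\frac{6n}{\delta}+\frac{4n}{\delta}$. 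Your route through a connected $2$-dominating core is closer in spirit to Chandran et al., but even there the ``small fixed number of entry colours'' hides a real problem: if two vertices $x_1,x_2$ outside the core both enter it along length-$2$ paths using the same pair of entry colours, the concatenated $x_1$--$x_2$ path repeats a colour and is not rainbow, so this step needs a genuine argument rather than an $O(1)$ bookkeeping remark.
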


        Next we turn to random regular graphs. The rainbow colouring of $G_{n,\, r}$ of Dudek et al.~\cite{dudek} typically uses $\Omega (r \log n)$ colours, which for large $r$ is significantly bigger than the diameter of $G_{n,\, r}$. Using our splitting lemma we can improve it to an asymptotically tight bound.
                \begin{theorem}\label{ranreg}
        There is an absolute constant $c$ such that for $r \geq 5\ $, $rc(G_{n,\, r}) \leq \dfrac{c\log n}{\log r}$ whp.
    \end{theorem}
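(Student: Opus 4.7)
The plan is to reduce the theorem to the edge-splitting principle described earlier: it suffices to exhibit, whp, two edge-disjoint spanning trees $T_1, T_2$ of $G_{n,\, r}$ sharing a common root $v_0$, each of depth at most $D = O(\log n / \log r)$. Once we have such trees, colouring the edges of $T_i$ by their depth from $v_0$, using two disjoint palettes of $D$ colours each, makes every $v_0$-to-$v$ path in each tree rainbow; concatenating the rainbow $u$-to-$v_0$ path in $T_1$ with the rainbow $v_0$-to-$w$ path in $T_2$ yields a rainbow $u$-$w$ walk for any $u, w$, so $rc(G_{n,\, r}) \le 2D$.

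To construct the two trees, fix any vertex $v_0$ and grow both simultaneously by a joint BFS from $v_0$ with an edge-splitting rule. At $v_0$ itself, the $r$ incident edges are partitioned into two sets of sizes $\lceil r/2 \rceil$ and $\lfloor r/2 \rfloor$, which become the level-$1$ edges of $T_1$ and $T_2$ respectively. At every subsequent vertex $v$, of the $r$ edges incident to $v$, one is used as its parent edge in $T_1$ and one as its parent edge in $T_2$, and the remaining $r - 2$ edges are split between $T_1$-children and $T_2$-children so that, averaged over two consecutive levels, each tree picks up roughly $(r-2)/2$ children per vertex (for instance by alternating a $\lceil (r-2)/2 \rceil$ vs.\ $\lfloor (r-2)/2 \rfloor$ split across consecutive levels). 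Since $(r-2)/2 \ge 3/2 > 1$ for $r \ge 5$, this guarantees geometric growth of each tree with an effective branching factor bounded below by an absolute constant strictly greater than $1$.

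The depth bound follows from standard properties of $G_{n,\, r}$ established via the configuration model. For any fixed $v_0$ and $k \le k_0 := c_1 \log n / \log r$ with sufficiently small $c_1 > 0$, the $k$-ball around $v_0$ is whp cycle-free, so every vertex at these initial levels contributes the full $r - 2$ forward edges and the nominal branching factors are realised exactly; hence both $T_1, T_2$ reach $n^{\Omega(1)}$ vertices by depth $k_0$. From this large base, the classical vertex expansion of random regular graphs ($|N(S)| = \Omega(r |S|)$ for all $|S| \le n/2$) forces the remaining vertices to be absorbed by both trees within a further $O(\log n / \log r)$ levels. Altogether, both trees have depth $O(\log n / \log r)$ uniformly in $r \ge 5$.

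The main obstacle is verifying that the joint BFS does not break down during the exposure of $G_{n,\, r}$: one must show that, whp, the number of back edges at every vertex of every processed level is small, so that the promised branching factors are maintained throughout. This follows from a union bound over the $O(n)$ vertices and $O(\log n)$ levels, combined with the standard estimate that short cycles appear during the BFS exposure only with probability $o(1/n)$ at each step. The most delicate case is $r = 5$, where the per-vertex forward degree is only $3$ and even a single back edge alters the children counts noticeably; here one verifies that such anomalous vertices are too rare to prevent either tree from spanning within $O(\log n)$ levels, absorbing the resulting constant-factor overhead into the absolute constant $c$ of the theorem.
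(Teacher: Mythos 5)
Your reduction to two low-depth spanning structures rooted at a common vertex is in the spirit of the paper's edge-splitting lemma, but the construction you propose for the two trees does not work. The central obstruction is local tree-likeness: whp the ball of radius $k$ around $v_0$ in $G_{n,\,r}$ is a tree for $k$ up to a constant fraction of $\log n/\log r$, so a vertex $v$ at distance $k$ from $v_0$ has a \emph{unique} neighbour at distance $k-1$. It therefore cannot have two edge-disjoint parent edges, one per tree, both leading to the previous BFS level --- which is exactly what your joint BFS assigns to every vertex. Any second edge-disjoint path from $v$ to $v_0$ must leave the ball, so the two trees cannot both realise the BFS distance structure; the construction collapses already at the first levels. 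Two further claims are also unsound: vertex expansion $|N(S)|=\Omega(r|S|)$ cannot hold for all $|S|\le n/2$ (it fails once $|S| = \omega(n/r)$), and the assertion that back edges are rare ``at every vertex of every processed level'' is false once the explored set has size $\Theta(n)$, where each newly exposed half-edge lands in the explored set with constant probability; the proposed union bound does not repair this.

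The paper avoids these problems by splitting globally rather than locally: by contiguity, $G_{n,\,r}\approx G_{n,\,r_1}\oplus G_{n,\,r_2}$ with $r_1,r_2\ge 3$, so the graph decomposes into two edge-disjoint spanning subgraphs, each of which is itself a random regular graph (or, for $r=5,6$, a Hamilton cycle plus a partial random matching) of diameter $O\left(\log n/\log (r_i-1)\right)$ whp; Lemma \ref{main} then finishes the proof. The separate treatment of $r=5$ via Theorem \ref{halfmatch} is where the genuine extra work lies, and your brief closing remark about $r=5$ does not substitute for it. If you want to salvage a direct construction, you would need to show that the two subgraphs produced by your splitting rule are each connected with small diameter as \emph{graphs in their own right}, not that they support a common BFS layering --- and that is essentially what the contiguity decomposition delivers for free.
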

        For $r \geq 6$, the theorem is an immediate consequence of the contiguity of different models of random regular graphs. With little extra work, our approach also works for $5$-regular graphs. We would like to point out that the proof of Dudek et al. works starting from $r=4$.

        The question of which characteristics of $G_{n,\, r}$ ensure small rainbow connectivity arises naturally. Recalling that expander graphs also have diameter logarithmic in $n$, it makes sense to look at expansion properties. The following theorem can be viewed as a generalisation of the previous result on $G_{n,\, r}$.
        \begin{theorem} \label{expander}
            Let $\epsilon > 0 $. Let $G$ be a graph of order $n$ and degree $r$ whose edge expansion is at least $\epsilon r$. Furthermore, assume  that $r \geq %\dfrac{64}{\epsilon}
            \max \left\{ 64 \epsilon^{-1}\log \left(%\dfrac{64}{\epsilon}
            64 \epsilon^{-1}\right),\ 324 \right \}$. Then $rc(G) = O \left(%\dfrac{1}{\epsilon}
            \epsilon^{-1}\log n \right).$
        \end{theorem}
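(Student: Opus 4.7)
The plan is to carry out the edge-splitting programme announced above: partition $E(G) = E_1 \sqcup E_2$ so that each spanning subgraph $H_i := (V, E_i)$ has BFS-radius $D = O(\epsilon^{-1}\log n)$ from some common vertex $v$. Given such a decomposition, taking BFS trees $T_i \subseteq H_i$ rooted at $v$ and colouring their edges by level using two disjoint palettes of $D$ colours each (any remaining edges coloured arbitrarily) yields a rainbow colouring of $G$ with $2D = O(\epsilon^{-1}\log n)$ colours: for any $u,w\in V$, the concatenation of the $uv$-path in $T_1$ with the $vw$-path in $T_2$ uses each colour at most once.

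To construct the splitting I would include each edge of $G$ in $E_1$ independently with probability $1/2$, fix $v$, and show that BFS from $v$ in $H_1$ reaches all of $V$ within $D$ steps with probability at least $3/4$; by symmetry the same holds for $H_2$, and a union bound ensures both succeed simultaneously. I would analyse the BFS by revealing the $E_1/E_2$ labels lazily as the layers $B_0 = \{v\}, B_1, \ldots$ grow. Conditional on the history $\mathcal{F}_t$, the labels of $G$-edges between $\partial B_t := B_t \setminus B_{t-1}$ and $V\setminus B_t$ are i.i.d.~fair coin flips, while every $G$-edge from $B_{t-1}$ to $V\setminus B_t$ is forced into $E_2$ (else its outer endpoint would already lie in $B_t$). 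Combining this with the edge expansion $e_G(B_t, V\setminus B_t)\geq \epsilon r \card{B_t}$ for $\card{B_t}\leq n/2$, I would extract a multi-step growth estimate of the form $\mathbb{E}[\card{B_{t+k}} \mid \mathcal{F}_t] \geq (1+c\epsilon)^k \card{B_t}$ for an absolute constant $c$, amortising over blocks of consecutive steps to absorb the negative contribution arising from $B_{t-1}$.

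The expectation bound must then be converted into a with-high-probability growth statement over all $D$ steps. I would split the analysis at a threshold on $\card{B_t}$. In the \emph{large-ball regime}, Chernoff applied to the indicators that individual $u \in V\setminus B_t$ join $B_{t+1}$ (which are independent conditional on $\mathcal{F}_t$, since the corresponding edge sets to $\partial B_t$ are disjoint) gives geometric growth at each step with probability $1 - e^{-\Omega(\epsilon r \card{B_t})}$, and the union bound over realisations of $B_t$, using $\log\binom{n}{\card{B_t}} \leq \card{B_t}\log(en/\card{B_t})$, survives exactly because of the hypothesis $r \geq 64\epsilon^{-1}\log(64\epsilon^{-1})$, which is calibrated to make the Chernoff exponent dominate the combinatorial factor. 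In the \emph{small-ball regime} the naive union bound is too expensive, so I would track $B_t$ as a single adaptive random sequence and analyse it via a branching-process argument driven by the multi-step expectation; the hypothesis $r \geq 324$ enters here, e.g.~through Chernoff on the singleton $\{v\}$, ensuring $\card{B_1} = d_{H_1}(v) \geq r/4$ with probability bounded away from zero and kick-starting the process.

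The delicate step is the small-ball regime, where set-by-set Chernoff bounds break down and the BFS ball must be controlled as a single stochastic process rather than compared to an arbitrary fixed set of the same size. Once the ball crosses into the large-ball regime, geometric growth by a factor $1+c\epsilon$ for $O(\epsilon^{-1}\log n)$ further steps carries BFS to all of $V$ (the tail phase $\card{B_t}>n/2$ is handled by the symmetric expansion of $V\setminus B_t$). This yields $T_1 \subseteq H_1$, and by symmetry the same argument applies to $H_2$, producing $T_2$; the colouring scheme from the first paragraph then gives $rc(G) = O(\epsilon^{-1}\log n)$.
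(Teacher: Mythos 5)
Your high-level plan (split $E$ into two connected spanning subgraphs of radius $O(\epsilon^{-1}\log n)$ rooted at a common vertex and colour each BFS tree by level with disjoint palettes) is exactly the paper's Lemma \ref{main}, but the way you construct the splitting has a fatal gap. The claim that the uniformly random splitting (each edge placed in $E_1$ independently with probability $1/2$) makes $H_1$ connected with small radius ``with probability at least $3/4$'' is false for fixed $r$ and $n\to\infty$: a fixed vertex is isolated in $H_1$ with probability $2^{-r}$, so the expected number of $H_1$-isolated vertices is $n2^{-r}\to\infty$, and since these events are independent for non-adjacent vertices, a routine second-moment computation shows that whp $H_1$ (and likewise $H_2$) contains isolated vertices and is therefore disconnected. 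No BFS, branching-process or amortisation argument can get around this; the failure probability of your scheme tends to $1$, not to something below $1/4$. Relatedly, the conditional estimate $\mathbb{E}[\card{B_{t+k}}\mid\mathcal{F}_t]\geq(1+c\epsilon)^k\card{B_t}$ cannot hold for all histories: conditioned on a history in which every $G$-edge leaving $B_t$ has its inner endpoint in $B_{t-1}$ (hence has already been forced into $E_2$), the ball never grows again.

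The underlying obstruction is that each local failure has probability exponentially small in $r$ while there are $n$ potential failures, so neither a union bound nor a constant overall failure probability is available; the correct tool is the Lov\'asz Local Lemma, and this is exactly how the paper proceeds. It invokes the Frieze--Molloy splitting theorem (Theorem \ref{splitting}, proved via the Local Lemma) with $\lambda=1/2$ --- the hypotheses $r\geq 64\epsilon^{-1}\log\left(64\epsilon^{-1}\right)$ and $r\geq 324$ are calibrated precisely to its conditions $\Phi/\log r\geq 32$ and $r/\log r\geq 56$, not to a Chernoff-versus-entropy trade-off as you suggest --- obtaining a partition in which \emph{both} subgraphs $G_i$ have edge expansion at least $\epsilon r/4$. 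Since the maximum degree is at most $r$, this edge expansion converts into vertex expansion $\card{\Gamma(S)}\geq\left(1+\epsilon/4\right)\card{S}$, giving $diam(G_i)=O(\epsilon^{-1}\log n)$ deterministically, and Lemma \ref{main} finishes the proof. To repair your argument you would need to either quote that splitting theorem or rerun its Local Lemma argument for the weaker local property you actually need (for instance, that every vertex retains $\Omega(r)$ edges in each part, combined with expansion of large sets).
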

\noindent{In particular, this theorem applies to $(n,\, r,\, \lambda)$-graphs with $\lambda \leq r(1 - 2\epsilon)$, i.e. $n$-vertex $r$-regular graphs whose all eigenvalues except the largest one
are at most $\lambda$ in absolute value. }

        Krivelevich and Yuster \cite{krivelevich} have introduced the corresponding concept of \emph{rainbow vertex connectivity} $rvc(G)$, the minimal number of colours needed for a rainbow colouring of vertices of $G$. The only point to clarify is that a path is said to be rainbow if its \emph{internal} vertices carry distinct colours. The easy bounds $diam(G) -1 \leq rvc(G) \leq n$ also hold in this setting. Krivelevich and Yuster have demonstrated that it is impossible to bound the rainbow connectivity of $G$ in terms of its vertex rainbow connectivity, or the other way around. They also bound $rvc(G)$ in terms of the minimal degree.

        Our approach essentially works for vertex colouring as well. In Section 3 we present the vertex-splitting lemma. It is then used to prove the vertex-colouring analogue of Theorem \ref{ranreg} on random regular graphs.
        \begin{theorem} There is an absolute constant $c$ such that whp
            $rvc(G_{n,\, r}) \leq  \dfrac{c\log n}{\log r}$ for all $r \geq 28$.
        \label{rvc}
        \end{theorem}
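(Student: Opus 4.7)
The plan is to follow the same outline as the proof of Theorem \ref{ranreg}, but with the vertex-splitting lemma from Section 3 in place of the edge-splitting lemma. The vertex-splitting lemma should reduce the problem to finding in $G_{n,r}$ a vertex $v_0$ together with a partition $V(G_{n,r}) \setminus \{v_0\} = V_1 \sqcup V_2$ and spanning trees $T_i$ of $G_{n,r}[V_i \cup \{v_0\}]$ rooted at $v_0$, each of depth $O(\log n / \log r)$, plus a mild cross-connectivity hypothesis such as: every $v \in V_i$ has a neighbour in $V_{3-i} \cup \{v_0\}$. Given such data, colouring each $v \in V_i$ by the pair $(i, \mathrm{depth}_{T_i}(v))$ and giving $v_0$ its own colour uses $d_1 + d_2 + 1$ colours in total: for $u \in V_1$ and $v \in V_2$ one concatenates the $T_1$-path $u \to v_0$ with the $T_2$-path $v_0 \to v$, and pairs $u, v$ on the same side use an analogous detour through $v_0$ closed off by one crossing edge supplied by the hypothesis.

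To exhibit the required data in $G_{n,r}$ I fix an arbitrary $v_0$ and split its $r$ neighbours into sets $A_1, A_2$ of sizes $\lfloor r/2 \rfloor$ and $\lceil r/2 \rceil$. From these starting layers I grow two BFS processes in parallel through $G_{n,r} - v_0$, assigning each newly discovered vertex to whichever process first reaches it and breaking ties arbitrarily; this produces the candidate partition $V_1, V_2$ and trees $T_1, T_2$.

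The key step is verifying, whp, the depth bound together with cross-connectivity. The depth bound uses the well-known strong expansion of $G_{n,r}$ for $r \geq 28$: whp every set $S \subseteq V(G_{n,r})$ of size at most $n/2$ satisfies $|N(S) \setminus S| \geq (1 + \alpha)|S|$ for an absolute constant $\alpha > 0$, with the stronger estimate $|N(S) \setminus S| \geq (r - o(r))|S|$ holding for sufficiently small $|S|$. Combined with the parallel growth of the two processes, this forces each $T_i$ to reach a constant fraction of $V(G_{n,r})$ within $(1 + o(1)) \log n / \log r$ rounds, after which $O(1)$ further rounds suffice to absorb the remaining vertices.

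The main obstacle I expect is the endgame of the parallel BFS: a few stragglers may be claimed only by one side, or may violate cross-connectivity by having all their neighbours within a single class. Using Friedman's theorem, whp $\lambda(G_{n,r}) \leq 2\sqrt{r - 1} + o(1) = o(r)$ for $r \geq 28$, so $G_{n,r}$ is an $(n, r, \lambda)$-expander; this rules out any substantial one-sided set and lets me reassign a bounded number of problematic vertices without harming the depth bounds. Applying the vertex-splitting lemma of Section 3 then yields $rvc(G_{n,r}) \leq c \log n / \log r$ whp.
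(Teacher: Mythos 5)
Your high-level skeleton (two connected, low-depth pieces glued by a vertex-splitting lemma) matches the paper's, but the way you construct the two pieces has a genuine gap that I do not think can be patched as described. The vertex-splitting lemma (Lemma \ref{rvcmain}) requires that \emph{every} vertex of $V_1$ have a neighbour in $V_2$ and vice versa --- this is what closes off a rainbow path between two vertices on the same side. A partition produced by two parallel BFS processes fails this condition not for ``a few stragglers'' but for a positive fraction of all vertices: in the first $\approx\frac{1}{2}\log_{r-1}n$ rounds the two explored regions are whp disjoint trees, so every vertex discovered in that phase has all $r$ of its neighbours claimed by its own process. The violating set has linear size, so it cannot be repaired by ``reassigning a bounded number of problematic vertices'' (and wholesale reassignment would destroy the connectivity and depth structure you need). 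The paper avoids this exact problem by \emph{starting} from a Lov\'asz Local Lemma partition (Lemma \ref{partition}) in which every vertex has at least $0.11r$ neighbours on \emph{both} sides; that surplus is then also what drives the diameter bound for each part.

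A second, independent issue is the depth bound. The expansion hypothesis $|N(S)\setminus S|\ge(1+\alpha)|S|$ controls the growth of the \emph{union} of the two BFS regions (i.e.\ the ball around $v_0$), not of each region separately; once the regions begin to collide, nothing in your argument prevents one process from starving the other, so ``each $T_i$ reaches a constant fraction of $V$'' does not follow. The paper instead proves expansion of each induced subgraph $G[U_i]$ directly (Claims 2 and 3), using the edge-distribution estimates of Lemma \ref{density} in the configuration model together with the guarantee of $\gamma r$ neighbours inside one's own part; connectivity of each part is then patched with $O(1)$ extra vertices (Claim 1). As a minor point, Friedman's bound $\lambda\le 2\sqrt{r-1}+o(1)$ is not $o(r)$ for fixed $r\ge 28$ (it is about $0.37r$ at $r=28$), so the spectral argument gives edge expansion $\Theta(r)$ but not the near-$r$ vertex expansion you invoke for small sets; that stronger statement again requires a density estimate of the type in Lemma \ref{density}.
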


\section{Edge rainbow connectivity}
    \subsection{The edge-splitting lemma}
    We state and prove the main lemma. The rest of the section uses the same notation for spanning subgraphs $G_1$ and $G_2$.

    \begin{lemma} \label{main}
            Let $G = (V,\, E)$ be a graph. Suppose $G$ has two connected spanning subgraphs $G_1=(V, E_1)$ and $G_2 = (V,\, E_2)$ such that $|E_1 \cap E_2| \leq c$.
            Then $rc(G) \leq diam(G_1) + diam (G_2) + c.$
    \end{lemma}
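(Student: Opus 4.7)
The plan is to fix an arbitrary root $v$ and take BFS spanning trees $T_1 \subseteq G_1$ and $T_2 \subseteq G_2$, both rooted at $v$. Since the eccentricity of $v$ in $G_i$ is at most $\mathrm{diam}(G_i)$, the depth of $T_i$ is at most $\mathrm{diam}(G_i)$. The idea is then to colour each $T_i$ level by level from its own disjoint palette, reserving a small third palette to privately recolour the (at most $c$) shared edges so that the two level-colourings never conflict.

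Concretely, I would introduce three disjoint palettes $A=\{a_1,\dots,a_{\mathrm{diam}(G_1)}\}$, $B=\{b_1,\dots,b_{\mathrm{diam}(G_2)}\}$ and $C=\{c_1,\dots,c_c\}$, giving $\mathrm{diam}(G_1)+\mathrm{diam}(G_2)+c$ colours in total. First, assign each edge of $E(T_1)\cap E(T_2)\subseteq E_1\cap E_2$ a distinct colour from $C$. Then colour every remaining edge of $T_1$ at $T_1$-level $j$ with $a_j$, and every remaining edge of $T_2$ at $T_2$-level $j$ with $b_j$. Edges outside $T_1\cup T_2$ may be coloured arbitrarily from the existing palette.

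To verify rainbow-connectedness, fix $u,w\in V$ and let $P_1,P_2$ be the unique $v$-$u$ path in $T_1$ and the unique $v$-$w$ path in $T_2$. Their union is a connected subgraph containing both $u$ and $w$, hence contains a simple $u$-$w$ path $P\subseteq T_1\cup T_2$, and the claim is that $P$ is rainbow. Within $T_1\cup T_2$, each $C$-colour lies on a unique edge and so cannot repeat on $P$. Within $T_1\cup T_2$, a colour $a_j\in A$ occurs only on edges of $T_1\setminus T_2$ at $T_1$-level $j$; any such edge on $P$ lies in $P_1\cup P_2$ and, not being in $T_2$, must lie on $P_1$, which contains exactly one edge per level. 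The $B$-colours are handled identically, and $A,B,C$ are pairwise disjoint, so $P$ is rainbow.

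The argument is essentially clean; the only conceptual subtlety is the possibility that an edge lies in both BFS trees at inconsistent levels, which is precisely why the private palette $C$ must be introduced and its size permitted to absorb $|E_1\cap E_2|$. I do not anticipate any further obstacle.
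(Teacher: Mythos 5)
Your proof is correct and takes essentially the same approach as the paper: both colour edges by distance level from a common root $v$ with two disjoint palettes, reserve a private palette of size $c$ for the shared edges, and connect any pair of vertices by routing through $v$. Working with BFS spanning trees and colouring the remaining edges arbitrarily, rather than colouring all edges between consecutive distance sets as the paper does, is only a cosmetic difference.
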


    \begin{proof}
        Let $B = E_1 \cap E_2$. Colour the edges of $B$ in distinct colours. These colours will remain unchanged, and the remaining edges get coloured according to graph distances in $G_1$ and $G_2$, denoted by $d_1$ and $d_2$. Choose an arbitrary $v \in V$ and define distance sets
        $
            U_j = \{u \in V:  d_1(v, u) = j\} \text{ and }
            W_j =  \{u \in V:  d_2(v, u) = j\}$.
        For $1 \leq j \leq diam(G_1)$, colour the edges between $U_{j-1}$ and $U_j$ with colour $a_j$. Similarly, using a new palette $(b_j)$, colour the edges between $W_{j-1}$ and $W_j$ with colour $b_j$ for each $1 \leq j \leq diam(G_2)$. The colouring indeed uses at most $\ diam(G_1)+diam(G_2)+c\ $ colours.

        To see that it is a rainbow colouring, look at two vertices $x_1$ and $x_2$ in $V$. Let $P_i$ be a shortest path in $G_i$ from $x_i$ to $v$. By our definition of colouring on distance sets, both paths $P_1$ and $P_2$ are rainbow. If they are edge-disjoint, the concatenation is a rainbow path between $x_1$ and $x_2$. Otherwise, $P_1$ and $P_2$ can only intersect in edges of $B$. If this occurs, we walk from $x_1$ along $P_1$ to the earliest common edge. We use this edge to switch to $P_2$ and walk to $x_2$.
    \end{proof}

    \subsection{Rainbow connectivity and minimum degree}
    In this setting, the best possible result has been shown by Chandran et al \cite{chandran}. Namely, a connected graph $G$ of order $n$ and minimum degree $\delta$ satisfies $rc(G) \leq \frac{3n}{\delta+1}+3.$
    We show how the splitting lemma can be used with basic graph theory to obtain a good upper bound, $rc(G) \leq \frac{16n}{\delta} $.

    \begin{proof}[Proof of Theorem \ref{mindeg}]
        Let $G=(V, E)$ be as in the statement. We split $G$ into two spanning subgraphs of minimum degree at least $\frac{\delta-1}{2}$. First assume that all vertices of $G$ have even degree. Then, using connectedness of $G$, order its edges along an Eulerian cycle $e_1,\,e_2\,\dots\ e_m$, and define
        $$F_1 = \{e_j : j \in [m] \text{ even}\} \quad \text{and} \quad F_2 = \{e_j : j \in [m] \text{ odd}\}.$$
        Edges around each vertex are coupled into adjacent pairs $e_j e_{j+1}$, so this is indeed a balanced split. Let $H_i= (V, F_i)$ be the associated graphs.

        To apply this splitting to general $G$, note that the number of vertices of odd degree is even, so we can add a matching $M$ between those vertices. Even if $G^\prime = (V, E \cup M)$ contains double edges, it still has an Eulerian cycle. We apply the above procedure to $G^\prime$, and then remove the auxiliary edges $M$. The end result is that a vertex of odd degree $d$ in $G$ has degree $\frac{d \pm 1}{2}$ in $H_i$, so indeed subgraphs $H_i$ have minimum degree at least $\frac{\delta -1}{2}$.

        The graph $H_1$ may not be connected. But since the minimum degree of this graph is $\frac{\delta-1}{2}$, each connected component has order at least $\frac{\delta}{2}$. Hence the number of components of $H_1$ is at most $\frac{2n}{\delta}$, so we can add a set $B_1 \subset E$ such that $G_1=(V, F_1 \cup B_1) \text{ is connected, and } |B_1| \leq \frac{2n}{\delta}.$
        We define the set $B_2$ analogously.
        An elementary graph-theoretic result (mentioned in the introduction, see also \cite{erdos}) shows that subgraphs $G_1$ and $G_2$ of $G$ have diameters at most $\frac{3n}{\frac{\delta-1}{2}  +1} \leq \frac{6n}{\delta}$. Applying the edge-splitting lemma to $G_1$ and $G_2$ gives
        $rc(G) \leq \frac{6n}{\delta}+\frac{6n}{\delta}+\frac{4n}{\delta} \leq \frac{16n}{\delta}.$
        %!!
    \end{proof}

    \subsection{Expanders} %Pseudorandom. It is natural to conjecture this after random graphs.
        We adopt a weak definition of an expander. As before, $G = (V,\, E)$, the degree $r$ is fixed and the order $n$ tends to infinity. For $S \subset V$, we define $out(S)$ to be the set of edges with exactly one endpoint in $S$
        %and $\Gamma(S)$ the set of vertices within distance at most $1$ from $S$
        . A graph $G$ has edge expansion $\Phi$ %(vertex expansion $\phi$) 
        if every set $S \subset V$ with $|S| \leq \frac{n}{2}$ satisfies $|out(S)| \geq \Phi|S|$ 
        %\ \, $ \left(|\Gamma(S)| \geq \left(1+\phi \right) |S| \right)$
        .

        Frieze and Molloy \cite{molloy} have shown using the Lov\'asz Local Lemma that the natural random $k$-splitting of $E$ gives $k$ expander graphs with positive probability. We state their theorem for $k=2$.
        \begin{theorem} \label{splitting}
            Let $r$ be a natural number, $\lambda>0$ a real number, and $G = (V, E)$ an $r$-regular graph with edge expansion $\Phi$. Suppose
            $$\frac{\Phi}{\log r} \geq 8 \lambda^{-2} \quad \text{and} \quad \frac{r}{\log r} \geq 14 \lambda ^{-2}.$$
            Then there is a partition $E = E_1 \cup E_2$ such that both subgraphs $G_i = (V, E_i)$ have edge expansion at least $(1-\lambda)\frac{\Phi}{2}$.
        \end{theorem}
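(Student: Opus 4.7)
The plan is to apply the Lov\'asz Local Lemma to the random partition obtained by placing each edge of $G$ independently into $E_1$ or $E_2$ with probability $1/2$. For each non-empty $S \subset V$ with $s := |S| \leq n/2$, let $B_S$ denote the bad event that $|out(S) \cap E_i| < (1-\lambda)\Phi s/2$ for some $i \in \{1,2\}$. The goal is to show that with positive probability no $B_S$ occurs, which gives the desired partition.

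For each fixed $S$, the quantity $|out(S) \cap E_1|$ is a sum of $|out(S)| \geq \Phi s$ independent Bernoulli$(1/2)$ random variables. A one-sided Chernoff bound gives
$$\Pr[B_S] \leq 2\exp\bigl(-\lambda^2 \Phi s/4\bigr).$$
The exponential decay in $s$ is essential: it compensates for the fact that there are $\binom{n}{s}$ candidate sets of each size.

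The event $B_S$ is measurable with respect to the colors of edges in $out(S)$ only. Hence $B_S$ and $B_T$ are mutually independent whenever $out(S) \cap out(T) = \emptyset$; any common edge must be incident to a vertex of $S$, so dependent $T$'s must meet the closed neighborhood $N[S]$, which has size at most $s(r+1)$. Consequently, the number of dependent $T$'s of size $t$ is at most $s(r+1)\binom{n-1}{t-1}$.

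The main obstacle is balancing these ingredients via the \emph{asymmetric} LLL — the symmetric version fails because there are exponentially many small bad events, and the weight assigned to $B_S$ must itself decay with $|S|$. I would set $x_S = y^{|S|}$ for some $y \in (0,1)$ and verify $\Pr[B_S] \leq x_S \prod_{T \not\perp S} (1-x_T)$. Taking logarithms and using $\log(1-z) \geq -2z$, the condition reduces to an inequality of roughly the form
$$\frac{\lambda^2 \Phi s}{4} \;\geq\; s\log(1/y) \;+\; 2s(r+1)\sum_{t \geq 1} \binom{n-1}{t-1} y^t.$$
Choosing $y = \Theta(1/r)$ so that the geometric-type sum $\sum \binom{n-1}{t-1} y^t$ remains bounded, the first hypothesis $\Phi/\log r \geq 8\lambda^{-2}$ absorbs the $\log(1/y) \sim \log r$ term, while the second hypothesis $r/\log r \geq 14\lambda^{-2}$ absorbs the $r$-dependent product contribution. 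Verifying the inequality uniformly for all $1 \leq s \leq n/2$, and optimizing the constants in $y$, is the quantitative heart of the argument.
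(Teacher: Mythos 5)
First, a point of comparison: the paper does not actually prove this statement --- Theorem~\ref{splitting} is quoted verbatim from Frieze and Molloy \cite{molloy} --- so there is no in-paper proof to match against. Your high-level plan (colour each edge independently and uniformly, then apply the asymmetric Lov\'asz Local Lemma to bad events indexed by vertex subsets) is indeed the strategy of the original argument, and several of your ingredients are correct: the Chernoff estimate $\pr{B_S}\le 2e^{-\lambda^2\Phi s/4}$, the fact that $B_S$ is determined by the colours of $out(S)$ alone, and the observation that any $T$ with $out(T)\cap out(S)\ne\emptyset$ must meet $N[S]$.

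However, the step you call the quantitative heart of the argument fails as set up. Taking one bad event for \emph{every} subset makes the dependency degree too large: the sum you must control is $\sum_{t\ge1}\binom{n-1}{t-1}y^t=y(1+y)^{n-1}$, which for $y=\Theta(1/r)$ is of order $e^{\Theta(n/r)}$ rather than bounded, so the choice you propose does not work; and forcing this sum to stay bounded requires $y=O(1/n)$, whence $\log(1/y)=\Omega(\log n)$, which cannot be absorbed by $\lambda^2\Phi/4\le\lambda^2 r/4$ once $n$ is large compared to $r$. The missing idea is to restrict the bad events to sets $S$ for which $G[S]$ is \emph{connected}. This loses nothing: if $S_1,\dots,S_p$ are the vertex sets of the components of $G[S]$, then $out(S)$ is the disjoint union of the sets $out(S_j)$ (there are no edges between distinct components), so expansion of each colour class on connected sets of size at most $n/2$ implies it on all such sets. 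The gain is decisive: the number of connected $t$-sets meeting $N[S]$ is at most $s(r+1)(er)^{t-1}$ (a standard subtree count in a graph of maximum degree $r$), so with $y=\Theta(1/r)$ the dependency sum becomes $O(s)$ and the Local Lemma condition reduces to $\lambda^2\Phi/4\ge\log r+O(1)$, which is exactly what the hypothesis $\Phi/\log r\ge 8\lambda^{-2}$ supplies (with $r/\log r\ge 14\lambda^{-2}$ guaranteeing that $r$ is large enough for the constants to close). Without this reduction, or some equivalent device for taming the number of dependent events, the verification you outline cannot be completed.
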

        Under stronger conditions on expansion, they also give a randomised polynomial-time algorithm for the splitting, which immediately gives a rainbow colouring.

        \begin{proof}[Proof of Theorem \ref{expander}]
            Let $G$ be an $r$-regular graph with edge expansion $\epsilon r$. We will apply Theorem \ref{splitting} with $\lambda = \frac{1}{2}$. The hypothesis $r \geq %\dfrac{64}{\epsilon}
            64 \epsilon^{-1}\log \left(%\dfrac{64}{\epsilon}
            64 \epsilon^{-1}\right)$ ensures that $\frac{\epsilon r}{\log r} \geq 32$, and the second inequality follows from $r \geq 324$. We get a partition $E = E_1 \cup E_2$ such that each graph $G_i = (V, E_i)$ has edge expansion at least $\frac{\epsilon r}{4}$. The maximum degree of $G_i$ is at most $r$, so every set $S$ of order $|S| \leq \frac{n}{2}$ has a neighbourhood $\Gamma(S)$ of order $|\Gamma(S)|\geq \left(1 + \frac{\epsilon}{4} \right)|S|$. Thus the number of vertices within distance at most $l$ from any vertex in $G_i$ is at least
            $\min \left \{ (1+\epsilon/4)^l,  n/2\right\}$ and therefore  $diam(G_i) = O \left(\epsilon^{-1} \log n\right)$.

            Applying Lemma \ref{main} gives $rc(G) \leq diam(G_1)+diam(G_2) = O \left(\epsilon^{-1} \log n\right).$
        \end{proof}

\subsection{Random regular graphs} %!!! polish
    Two sequences of probability spaces $\mathcal{F}_n$ and $\mathcal{G}_n$ on the same underlying measurable spaces are called \emph{contiguous}, written $\mathcal{F}_n \approx \mathcal{G}_n$, if a sequence of events $(A_n)$ occurs whp in $\mathcal{F}_n$ if and only if it occurs whp in $\mathcal{G}_n$.
Let $\mathcal{G}$ and $\mathcal{G}^{\prime}$ be two models of random graphs on the same vertex set. We get a new random graph $G$ by taking the union of independently chosen graphs $G_1 \in \mathcal{G}$ and $G_2 \in \mathcal{G}^\prime$, conditional on the event $E(G_1) \cap E(G_2) = \emptyset$. The probability space of such disjoint unions is denoted by $\mathcal{G} \oplus \mathcal{G}^\prime$.

It is known that $G_{n,\,r}$ is contiguous with any other model which builds an $r$-regular graph as an edge-disjoint union of random regular graphs and Hamiltonian cycles. This goes back to the work of Janson \cite{janson}, Robinson and Wormald \cite{robinson}, and is also laid out in the survey \cite{wormald}. The specific results we use in proving Theorem \ref{ranreg} are $G_{n,\, r + r^\prime} \approx G_{n,\, r} \oplus G_{n,\, r^\prime}$ and $G_{n,\,r+2} \approx G_{n,\,r} \oplus H_n$, where $H_n$ is a random Hamiltonian cycle on $[n]$. Recall that Theorem \ref{ranreg} says that for $r \geq 5$, \;$rc(G_{n,\,r}) \leq \frac{c \log n}{\log r}$ whp.

    \begin{proof}[Proof of Theorem \ref{ranreg} for $r \geq 6$.]
        As usually, we assume that $rn$ is even, and define $r_i$ so that $G_{n,\,r_i}$ are non-empty for $i = 1,\, 2$. If $r$ is odd, then $n$ is even and we can set $r_i = \frac{r\pm 1}{2}$. Otherwise, we set $r_1 = r_2 = \frac{r}{2}$ or $r_i = \frac{r}{2} \pm 1$ as appropriate. The observation at the end of the proof resolves the case $r=6$.

        Let $G_i$ be a random $r_i$-regular graph, $r_i \geq 3$. Then with high probability
        $diam(G_i) \leq \frac{(1+o(1))\log n}{\log (r_i-1)} \leq \frac{c\log n}{2 \log r}$, where $c$ is a suitable constant. Let $G$ be the union of two such edge-disjoint graphs $G_1$ and $G_2$. The splitting lemma gives $rc(G) \leq \frac{c\log n}{ \log r}.$

        Since $G$ was a random element of $G_{n,\, r_1 } \oplus G_{n,\, r_2 }$, the random $r$-regular graph has the same property whp.

        For $r =6$ and odd $n$, we take $G$ to be sampled from $H_n \oplus H_n \oplus H_n$. The first two Hamiltonian cycles belong to $G_1$, resp. $G_2$. We split the edges of the third Hamiltonian cycle $H_n$ alternately, so that $\frac{n-1}{2}$ edges are assigned to $G_1$ and $\frac{n+1}{2}$ to $G_2$. Then we can quote Proposition \ref{diamh}, a result of Bollob\'as and Chung which says that the union of a Hamiltonian cycle and a random perfect matching has whp logarithmic diameter \cite{chung}.
    \end{proof}

    The remainder of the section deals with the case $r=5$. Since $G_{n,\, 5} \approx G_{n,\, 1} \oplus H_n \oplus H_n$, we can model our $5$-regular graph as a union of two random graphs $G_1$ and $G_2$, where each $G_i$ is an edge-disjoint union of a Hamiltonian cycle and a matching of size $\left \lfloor \frac{n}{4} \right \rfloor$. The following theorem says that whp each $G_i$ has diameter $O(\log n)$, so $rc(G)= O(\log n)$ whp follows from the splitting lemma.

    \begin{theorem} \label{halfmatch}
        Let $G$ be a random graph on $[n]$, the union of the cycle $(1,\, 2, \dots, n,\, 1)$ and a random matching on $[n]$ consisting of $\left \lfloor \frac{n}{4} \right \rfloor$ edges. Then $G$ has diameter $O( \log n )$ whp.
    \end{theorem}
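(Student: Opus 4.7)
The plan is to show $\mathrm{diam}(G) = O(\log n)$ whp by bounding, from any vertex $v$, the BFS ball $B_k(v)$ to exceed size $n/2$ within $k = O(\log n)$ steps; two such balls then necessarily intersect, giving the diameter bound.

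I would analyse $|B_k(v)|$ using a deferred-revelation BFS on the random matching $M$: at each step, reveal only the matching status of vertices added in the previous step, so that conditional on past exposures the remaining matching is uniformly distributed on the unexposed vertex set. In the initial phase, cycle edges alone give $|B_k(v)| \geq 2k + 1$ deterministically, so after $k_0 = C_0 \log n$ steps $|B_{k_0}(v)| \geq 2 C_0 \log n$. In the expansion phase, one shows that whenever $C_0 \log n \leq |B_k(v)| \leq n/2$, the new layer $D_{k+1} = B_{k+1}(v) \setminus B_k(v)$ satisfies $|D_{k+1}| \geq \alpha |B_k(v)|$ whp for some absolute constant $\alpha > 0$; iterating this for $O(\log n)$ further steps drives $|B_k|$ past $n/2$.

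The per-step expansion combines two sources: cycle edges at the endpoints of the arcs of $B_k$ contribute roughly $2\,\#\mathrm{arcs}(B_k)$ new vertices, while matching partners of $D_k$ that land outside $B_k$ contribute $\Omega(|D_k|)$ in expectation. Indeed, conditional on past exposures, each $u \in D_k$ is matched with probability at least $1/3$ (by hypergeometric-type concentration of how much of the matched set $V(M)$ lies in $B_k$), and if matched its partner falls outside $B_k$ with probability at least $1/2$ whenever $|B_k| \leq n/2$. Writing $a_k = \#\mathrm{arcs}(B_k)$, this gives a coupled recursion in $(a_k,|B_k|)$ from which both quantities grow geometrically. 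Azuma-type concentration on the exposure sequence (a single exposure changes the relevant count by $O(1)$) makes the recursion precise at high probability.

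The main obstacle is the initial regime, when $|D_k|$ may be as small as $2$ and matching-based randomness is weak. The key observation is that in this regime the cycle deterministically guarantees $|D_{k+1}| \geq 2$, preventing the process from dying, while fresh matching exposures steadily create new arcs at expected rate $\Omega(1)$ per step. Within $O(\log n)$ such bootstrap steps $a_k$ and $|B_k|$ both reach $\Omega(\log n)$ whp (by concentration of matching events along the BFS tree), after which the cycle contribution at arc endpoints is already of size $\Omega(|B_k|)$, and Azuma concentration takes over to furnish the required constant-factor growth for the remaining $O(\log n)$ steps.
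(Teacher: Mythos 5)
Your approach is genuinely different from the paper's. The paper avoids analysing BFS on $G$ altogether: it generates $G$ by first choosing the set $B$ of $2m$ matched vertices ($m=\lfloor n/4\rfloor$) and then a uniform perfect matching on $B$, observes that contracting the cycle arcs between consecutive elements of $B$ yields exactly the Bollob\'as--Chung model (a $2m$-cycle plus a random perfect matching), quotes Proposition \ref{diamh} to get a path of length $s-1=O(\log n)$ in the contracted graph, and then lifts it back to $G$ by controlling the total length of the subdivided arcs the path uses: Lemma \ref{hyperg} shows that any fixed set of $s$ gap variables sums to at most $10s$ except with probability $e^{-2s}$, and a union bound over pairs of vertices finishes. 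Your plan is instead the direct ball-growing argument in the spirit of Krivelevich--Reichman--Samotij \cite{reichmann}, which the paper explicitly mentions as a workable but longer alternative. Carried out carefully it does prove the theorem, and it has the virtue of not relying on the black-box diameter result for cycle-plus-perfect-matching; the cost is considerably more bookkeeping (deferred revelation, a coupled recursion in the number of arcs and the ball size, and concentration with per-vertex failure probability $o(1/n)$ so that the union bound over starting vertices goes through).

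Two points in your sketch need repair before it is a proof. First, the stated induction step ``$|D_{k+1}|\ge \alpha|B_k(v)|$ whenever $C_0\log n\le |B_k|\le n/2$'' is false as a single-step claim: if $B_k$ consists of a bounded number of long arcs, then $|D_{k+1}|\le 2\,\#\mathrm{arcs}(B_k)+O(|D_k|)$ no matter how large $|B_k|$ is. Your later recursion in $(a_k,|D_k|)$ is the correct formulation --- it is $a_k$ that grows geometrically, and $|B_k|$ follows only after summing over layers --- so the argument must be organised around that from the outset. Second, the endgame is not addressed: since the matching covers only $2m\approx n/2$ vertices, as $|B_k|$ approaches $n/2$ the ball may contain nearly all matched vertices, at which point both of your probability lower bounds (``matched with probability at least $1/3$'', ``partner outside with probability at least $1/2$'') can fail and the matching contributes nothing further. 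One must either stop the growth at $\delta n$ for a small constant $\delta$ and join two such balls using a reserved, still-unexposed portion of the matching, or argue that exhaustion of the matching inside $B_k$ forces $a_k=\Omega(n)$ so that the cycle edges alone complete the expansion in $O(1)$ further steps. Either fix works, but the claim that the same per-step estimate pushes $|B_k|$ past $n/2$ does not.
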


    The theorem can be proved by adapting the argument of Krivelevich et al. \cite{reichmann}, who showed that starting from a connected $n$-vertex graph $C$ and in addition, turning each pair of vertices into an edge with probability $\frac{\epsilon}{n}$, the resulting graph typically has logarithmic diameter. This is very similar to what we need when $C$ is a Hamiltonian cycle. However, since we are adding a random matching rather than independent edges, our model is slightly different. Instead of reproving the result of \cite{reichmann} in our setting, we decided to give a different (very short) proof relying on the following result (see \cite{wormald}), which by contiguity simply says that $G_{n,\, 3}$ has logarithmic diameter whp. Without assuming that the cycle and matching are edge disjoint this was proved earlier by
    Bollob\'as and Chung \cite{chung}.

    \begin{prop} \label{diamh}
        Let $H$ be a graph formed by taking a disjoint union of a random matching of size $\left \lfloor \frac{n}{2} \right \rfloor$ and an $n$-cycle. Then the diameter of $H$ is whp  $(1+o(1))\log_2 n$.
    \end{prop}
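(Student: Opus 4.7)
The plan is to derive Proposition \ref{diamh} from the classical result (see \cite{wormald}, \cite{vega}) that $\mathrm{diam}(G_{n,3})=(1+o(1))\log_2 n$ whp, together with the contiguity $G_{n,3} \approx H_n \oplus M_n$ invoked just above the proposition. The matching lower bound is immediate since $H$ has maximum degree $3$: fewer than $3 \cdot 2^k$ vertices sit within distance $k$ of any given vertex, giving $\mathrm{diam}(H) \ge (1+o(1))\log_2 n$, so all the work is in the upper bound.

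For $n$ even I would argue as follows. By the contiguity, the graph $H_n \cup M_n$ sampled from $H_n \oplus M_n$ has diameter $(1+o(1))\log_2 n$ whp. Condition on $H_n = c$ for a fixed Hamiltonian cycle $c$: then $M_n$ is uniform over perfect matchings edge-disjoint from $c$, so the resulting graph is distributed exactly as the $H$ of Proposition \ref{diamh} (with $c$ replacing the cycle $(1,\dots,n,1)$). Since diameter is a graph invariant, the conditional probability $\Pr[\mathrm{diam}(c \cup M_n) \le k \mid H_n = c]$ is identical for every $c$ by vertex relabeling, hence equals the unconditional probability. Specialising to $c = (1,2,\dots,n,1)$ delivers the statement for even $n$.

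For $n$ odd the matching $M$ leaves one vertex $v$ uncovered; by symmetry $v$ is uniform on $[n]$. I would condition on $v$ and analyse $H - v$, which is the path $P = C_n - v$ together with a uniform perfect matching $M$ on $[n]\setminus\{v\}$ edge-disjoint from $P$. Adjoining the ``bypass'' edge $e = \{v-1, v+1\}$ to $P$ closes it into the $(n-1)$-cycle $C_{n-1}$. With probability $1 - O(1/n)$ one has $e \notin M$, and conditional on this event $M$ is uniform over perfect matchings on $[n]\setminus\{v\}$ edge-disjoint from $C_{n-1}$, so the even case on $n-1$ vertices yields $\mathrm{diam}(C_{n-1} \cup M) = (1+o(1))\log_2(n-1)$ whp. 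Any use of $e$ in a shortest path of $C_{n-1} \cup M$ can be simulated in $H$ by a length-two detour through $v$, so $\mathrm{diam}(H) \le \mathrm{diam}(C_{n-1} \cup M) + O(1) = (1+o(1))\log_2 n$ whp.

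The only delicate step is the odd case: one has to bridge the perfect-matching model with a near-perfect-matching model and absorb the $O(1/n)$ error from the ``bypass edge already lies in $M$'' event into the $o(1)$ factor of the final estimate. Everything else is a painless translation between equivalent distributions, exploiting that contiguity preserves high-probability events and that diameter is a graph isomorphism invariant.
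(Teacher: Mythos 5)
Your argument is correct and follows exactly the route the paper intends: the paper gives no proof of Proposition~\ref{diamh}, but states that it follows "by contiguity" from the known fact that $G_{n,3}$ has diameter $(1+o(1))\log_2 n$ whp, which is precisely your even-$n$ derivation. Your conditioning-on-the-cycle step and the odd-$n$ bypass argument are the natural (and sound) fleshing-out of the details the paper leaves implicit.
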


        Denote $m = \left \lfloor \frac{n}{4} \right \rfloor$. Note that $G$ in Theorem \ref{halfmatch} can be built in two steps as follows. First we select a random subset $B=\{b_1,\, b_2,\, \dots, b_{2m}\} \subset [n]$ of order $2m$, and then independently a random perfect matching on $\left \{b_1,\, b_2, \dotsc, b_{2m} \right \}$. Throughout the proof we identify the vertices of $G$ with natural numbers up to $n$ and assume $b_1 < b_2 < \dots <b_{2m}$.

        Given a subset $B$, define variables $Y_i = b_{i+1} - b_{i}$ for $i = 1,\dots 2m-1$. Moreover, we define $Y_0 = b_1$ and $Y_{2m} = n-b_{2m}$ to record the positions of the first and the last vertex in $B$. An important observation is that a random set $B$ of order $2m$ induces a random sequence $(Y_0,\, Y_1, \dotsc, Y_{2m})$ with $Y_i \geq 1$ for $i<2m$, $Y_{2m} \geq 0$ and $\sum_{i=0}^{2m} Y_i = n$ and, vice versa, given such a random sequence, we can uniquely reproduce a corresponding set $B$, which is uniformly distributed over all subsets of $[n]$ of order $2m$. To complete the proof, we need the following simple lemma about $(Y_i)$.

        \begin{lemma} \label{hyperg}
            Let $(Y_0,\, Y_1, \dotsc, Y_{2m})$  be a random sequence as defined above. Fix a set of indices
            $0 \leq i_1 <i_2<\dots<i_s< 2m$.
        Then $\pr{Y_{2m}>\log n}=o(1)$ and
            $$\pr{ \sum_{j=1}^s Y_{i_j} > 10s} \leq e^{-2s}.$$
    \end{lemma}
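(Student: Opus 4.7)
The plan is to reinterpret both claims in terms of the underlying random $2m$-subset $B = \{b_1 < b_2 < \dots < b_{2m}\}$ of $[n]$ and reduce them to short explicit estimates via exchangeability and a hypergeometric Chernoff bound. The key preliminary step is the substitution $W_i = Y_i - 1$ for $0 \leq i < 2m$ and $W_{2m} = Y_{2m}$: the bijection recalled in the text makes $(W_0, W_1, \dots, W_{2m})$ uniform over non-negative integer compositions of $n - 2m$ into $2m + 1$ parts, hence fully exchangeable.

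For the first bound, I would observe that $\{Y_{2m} \geq k\}$ is exactly the event that $B$ misses the $k$ largest elements of $[n]$, so
$$\pr{Y_{2m} \geq k} = \binom{n-k}{2m}/\binom{n}{2m} = \prod_{i=0}^{k-1}\frac{n-2m-i}{n-i} \leq \left(1-\frac{2m}{n}\right)^{k} \leq \left(\frac{1}{2} + o(1)\right)^{k},$$
using $2m = 2\lfloor n/4 \rfloor \geq n/2 - 1$. Setting $k = \lceil \log n \rceil$ makes this $o(1)$.

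For the tail estimate, exchangeability together with $i_s < 2m$ lets us relabel the indices as $0, 1, \dots, s-1$ at no cost. Then $\sum_{j=1}^{s} Y_{i_j}$ has the same distribution as $\sum_{j=0}^{s-1} Y_j = b_s$, the position of the $s$-th smallest element of $B$, so
$$\pr{\sum_{j=1}^{s} Y_{i_j} > 10s} = \pr{b_s > 10s} = \pr{|B \cap [10s]| \leq s-1}.$$
The variable $X := |B \cap [10s]|$ is hypergeometric with mean $\mu = 10s \cdot (2m/n) = (1-o(1))\,5s$. The multiplicative Chernoff bound for binomials (which extends to hypergeometrics via Hoeffding's MGF comparison for sampling without replacement) applied with threshold $\sim \mu/5$ produces an exponent of essentially $(4 - \ln 5)\,s > 2s$, which is the $e^{-2s}$ we want.

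The only step that requires care is this last Chernoff computation: the constants $10$ in $10s$ and $2$ in $e^{-2s}$ are calibrated exactly so that $4 - \ln 5 > 2$ with a little slack to absorb the $(1-o(1))$ factor in $\mu$ and the Stirling-type corrections in the Chernoff formula. If that margin feels tight for small $s$, one can separately verify $s=1$ by the direct estimate $\pr{b_1 > 10} \leq (1/2 + o(1))^{10}$, which is far below $e^{-2}$, and then invoke Chernoff only for $s \geq 2$.
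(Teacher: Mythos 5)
Your proposal is correct and follows essentially the same route as the paper: both use exchangeability of $(Y_0,\dots,Y_{2m-1})$ to reduce to consecutive indices, identify the event $\sum Y_{i_j}>10s$ with $|B\cap[10s]|\le s-1$, and finish with a lower-tail bound for the hypergeometric distribution (the paper invokes the additive Hoeffding-type bound of Theorem 2.10 in \cite{luczak}, giving exponent about $3.2s$, while your multiplicative Chernoff form gives about $(4-\ln 5)s$ --- a cosmetic difference). The first claim is also handled identically via $\binom{n-\log n}{2m}/\binom{n}{2m}=o(1)$.
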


    \noindent
        {\em Proof of Lemma \ref{hyperg}.} \,
            Since permuting the variables $Y_i, i <2m$, does not change the probability space, without loss of generality we may assume $(i_1,\, \dots i_s) = (0, \dots,\, s-1)$. Recall that $Y_i$ were defined by $Y_i = b_{i+1}-b_i$, so that $\sum_{i=0}^{s-1} Y_i >10s $ means exactly that there are at most $s-1$ vertices of $B$ among the first $10s$ vertices.
            On the other hand, $|B \cap [10s]|$ is a hypergeometric random variable with mean $\frac{2m}{n}\cdot 10s$. Therefore, by the standard tail bounds (see, e.g., Theorem 2.10 in \cite{luczak}).
    $$\pr{\sum_{i=0}^{s-1} Y_i > 10s} = \pr{|B \cap [ 10s ] | \leq s-1}
    \leq e^{-\frac{2\left(\frac{20m}{n}-1\right)^2 s^2}{10s}} \leq e^{-2s} .$$
Similarly, $Y_{2m}>\log n$ means that no vertex of $B$ is in the interval $[n-\log n, n]$. The probability of this event is ${n-\log n \choose 2m}/{n \choose 2m}=o(1)$.
\hfill  $\Box$

    \begin{proof}[Proof of Theorem \ref{halfmatch}.]
        As we explained, our $G$ can be constructed as follows. Start with a cycle $b_1 b_2 \dots b_{2m} b_1$. Pick a random perfect matching $M$ on $B = \{b_1,\, b_2,\dotsc b_{2m}\}$ whose edges do not coincide with any edges of the cycle. Let $H=H(M)$ be the graph on $B$ formed as the union of the cycle $b_1 b_2 \dots b_{2m} b_1$ and the matching $M$. Choose a random sequence $(Y_0,\, Y_1, \dotsc, Y_{2m} )$ as above. The graph $G$ on $[n]$ is obtained by subdividing each edge $b_i b_{i+1}$ into $Y_i$ edges. The exception is the edge $b_{2m}b_1$, which is subdivided into $Y_{2m}+Y_0$ edges. Note that $M$ and $(Y_i)$ are chosen independently. Since $M$ is random,
        by Proposition \ref{diamh} whp $H(M)$ has diameter at most $(1+o(1))\log_2 (2m) \leq 1.5 \log n-1$. Condition on this event, and fix an arbitrary $M$ which satisfies the condition.

        We will show that for random $(Y_i)$, whp $G$ will have small diameter. We further condition on the event that
        $Y_{2m} \leq \log n$, which by the previous lemma holds whp. Let $s = 1.5\log n$.
        Take the vertices $u$ and $v$ in $[n]$, and single out the segments to which they belong, $b_{i} \leq u < b_{i+1}$ and $b_j \leq v < b_{j+1}$ ($i$ and $j$ are possibly $0$ or $2m-1$). $H$ contains a path $P$ between $b_i$ and $b_j$ of length at most $s-1$, which we turn into a path in $G$ as follows. If an edge on $P$ belongs to the matching $M$, then it is also an edge of $G$. Otherwise, if the edge has form $b_k b_{k+1}$, we replace it by the segment
        $b_{k},\, b_k +1,\, b_k +2,\ldots, b_{k+1}$ in $G$, whose length is $Y_k$. If $P$ contains the edge $b_{2m} b_{1}$, the corresponding segment has length $Y_{2m}+Y_0$. At the ends of the path, we walk from $u$ to $b_i$ and from $b_j$ to $v$.
        Denote by $U$ the set of indices $k<2m$ such that $P$ contains a vertex $b_k$. Since $Y_i \geq 1$
        for $i<2m$, the distance between $u$ and $v$ in $G$ is at most $Y_{2m}+1+\sum_{k \in U} \max\{1,Y_k\}<s+
        \sum_{k \in U} Y_k$. Note also that $|U|=|P|+1 \leq s$ and that $P,\, U$ do not depend on variables $(Y_k)$. Thus, by Lemma \ref{hyperg}, the probability that this distance exceeds $11s$ is at most $e^{-2s}=n^{-3}$. Taking the union bound over all pairs of vertices, $\pr{diam(G) > 11s \mid M} = O\left( n^{-1} \right) $.
        Since we conditioned on the event with probability $1-o(1)$, the probability that $diam(G)>11s$ is at most $o(1)$, completing the proof.
    \end{proof}

    \section{Vertex rainbow connectivity}
        We now state the vertex-colouring analogue of Lemma \ref{main}.
        \begin{lemma} \label{rvcmain}
            Let $G=(V, E)$ be a graph. Suppose that $V_1, V_2 \subset V$ satisfy: \emph{1)} $V_1\cup V_2=V$; \emph{2)} $|V_1\cap V_2|\le c$; \emph{3)}  every vertex $v \in V_1$ has a neighbour in $V_2$ and vice versa; \emph{4)} $G[V_i]$ is connected, for $i=1,2$. Then
            $$rvc(G) \leq diam \left(G[V_1] \right) + diam \left( G[V_2] \right) + c+2.$$
        \end{lemma}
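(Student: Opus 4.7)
The proof plan is to adapt Lemma \ref{main} to the vertex setting. By hypothesis~(3) pick adjacent vertices $v_1 \in V_1$ and $v_2 \in V_2$; these play the role of the root. Write $D_i = diam(G[V_i])$. I would colour the (at most $c$) vertices of $V_1 \cap V_2$ with pairwise distinct ``special'' colours (palette $C$); colour each $u \in V_1 \setminus V_2$ with $a_{d_1(u)}$, where $d_1(u)$ is the distance of $u$ to $v_1$ in $G[V_1]$ and $A = \{a_0, \dots, a_{D_1}\}$ is a fresh palette; and analogously colour each $u \in V_2 \setminus V_1$ by its distance to $v_2$ in $G[V_2]$, using a third fresh palette $B$ of size at most $D_2 + 1$. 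Since the palettes are disjoint, this uses at most $c + D_1 + D_2 + 2$ colours, matching the claimed bound.

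The main case is a rainbow path between $x \in V_1$ and $y \in V_2$. Let $P_1$ be a shortest $x$--$v_1$ path in $G[V_1]$ and $P_2$ a shortest $v_2$--$y$ path in $G[V_2]$. If $P_1 \cap P_2 = \emptyset$, I concatenate $P_1$, the edge $v_1 v_2$, and $P_2$. Otherwise any common vertex necessarily lies in $V_1 \cap V_2$; pick the one $w$ closest to $x$ along $P_1$ and splice the $x$-to-$w$ segment of $P_1$ with the $w$-to-$y$ segment of $P_2$. In both constructions the resulting walk is a simple path, because the $V_1$-portion and the $V_2$-portion share only the designated splicing point. The colour check is then routine: along any shortest $G[V_i]$-path the distances from $v_i$ are strictly monotone, so the $A$- and $B$-colours used on each half are distinct; $C$-colours are unique per vertex; and the three palettes are pairwise disjoint. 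A would-be cross-clash via a $C$-colour would force another common vertex of $P_1$ and $P_2$ lying on both sides of $w$, contradicting the choice of $w$.

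For $x,y$ both in $V_1$ (the case $x,y \in V_2$ is symmetric) I would use~(3) to pick a neighbour $y^* \in V_2$ of $y$, find a rainbow $x$--$y^*$ path $Q$ by the previous step, and append the edge $y^* y$; if $y$ already appears on $Q$, just truncate at its first occurrence. The only new internal vertex is $y^*$, and its colour (either $b_{d_2(y^*)}$ or its unique $C$-colour) does not appear elsewhere on $Q$ by the same disjointness argument.

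The delicate step is the re-routing at $w$: one must verify that choosing $w$ as the earliest common vertex along $P_1$ forces the two spliced segments to meet only at $w$, and that a $C$-coloured vertex cannot be repeated across the two halves. Everything else parallels the proof of Lemma \ref{main}.
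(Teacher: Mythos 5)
Your proposal is correct and follows essentially the same argument as the paper: distinct colours on $V_1\cap V_2$, distance-based palettes from adjacent roots $v_1,v_2$ on the remaining vertices of each part, concatenation through the edge $v_1v_2$ or splicing at the earliest common (necessarily $V_1\cap V_2$) vertex, and reduction of the remaining cases via hypothesis (3). The extra care you take with the splice point and the truncation step are details the paper leaves implicit, but the approach is the same.
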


        \begin{proof}
        Let $B = V_1 \cap V_2$. Colour the vertices of $B$ in distinct colours. These colours will remain unchanged, and the remaining vertices get coloured according to graph distances $d_i$ in $G_i = G[V_i]$. Choose root vertices $v_i \in V_i$ such that $v_1 v_2$ is an edge of $G$. Give each distance set
        $\{u \in V_1:  d_1(v_1, u) = j\}$ the colour $a_j$, for $0 \leq j \leq diam(G_1)$. Similarly, each set $\{u \in V_2:  d_2(v_2, u) = j\}$ gets colour $b_j$.

        To see that it is a rainbow vertex colouring, look at two vertices $x_1 \in V_1$ and $x_2$ in $V$. Suppose first that $x_2$ lies in $V_2$, and let $P_i$ be a shortest path in $G_i$ from $x_i$ to $v_i$. By our definition of colouring on distance sets, both paths $P_1$ and $P_2$ are rainbow. If they are vertex-disjoint, the concatenation $P_1 -  v_1 v_2 - P_2$ is a rainbow path between $x_1$ and $x_2$. Otherwise, $P_1$ and $P_2$ can only intersect in vertices of $B$. If this occurs, we walk from $x_1$ along $P_1$ to the earliest common vertex. We use this vertex to switch to $P_2$ and walk to $x_2$.

        If $x_2$ does not lie in $V_2$, we replace it with its neighbour in $V_2$, which exists by hypothesis, and then proceed with the argument.
				The case where $x_1,x_2 \notin V_1$ is treated similarly.
    \end{proof}
        \subsection{Random regular graphs}
            % This is a deterministic result, not whp
            \begin{lemma} Let $G$ be an $r$-regular graph, $r \geq 28$. Then the vertices of $G$ can be partitioned as $V = U_1 \cup U_2$ so that each $v \in V$ has at least $\,0.11r\,$ neighbours in both $U_1$ and $U_2$. \label{partition}
            \end{lemma}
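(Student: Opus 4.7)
The natural strategy is to assign each vertex of $G$ independently and uniformly at random to $U_1$ or $U_2$, and then invoke the Lov\'asz Local Lemma (LLL) to show that with positive probability every vertex has at least $0.11r$ neighbours in each class. The existence of a suitable partition follows at once.

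For each vertex $v \in V$, let $X_v$ count its neighbours placed in $U_1$, so $X_v \sim \mathrm{Bin}(r, 1/2)$. Let $A_v$ be the bad event that $v$ has fewer than $0.11r$ neighbours in some part, equivalently $|X_v - r/2| > 0.39\, r$. Hoeffding's inequality yields
$$ \pr{A_v} \,\leq\, 2\exp\bigl(-2(0.39)^2 r\bigr) \,=\, 2\, e^{-0.3042\, r}. $$

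The event $A_v$ is determined by the random assignments of $N(v)$, so $A_u$ and $A_v$ are mutually independent whenever $N(u) \cap N(v) = \emptyset$. The number of $u \neq v$ sharing at least one neighbour with $v$ is bounded by $|N(N(v)) \setminus \{v\}| \leq r^2 - 1$. The symmetric LLL therefore succeeds provided
$$ e \cdot 2\, e^{-0.3042\, r} \cdot r^2 \,\leq\, 1. $$
Since the logarithmic derivative of $r \mapsto r^2 e^{-0.3042\, r}$ equals $2/r - 0.3042$, the function is decreasing for $r \geq 7$, so it suffices to check the inequality at $r = 28$, where a direct numerical verification confirms it with a small margin. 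LLL then produces a partition $V = U_1 \cup U_2$ avoiding every $A_v$, which is exactly the conclusion of the lemma.

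The tightest step, and the sole reason for the threshold $r \geq 28$, is the numerical verification at the boundary: the exponent $0.3042\, r$ must overcome both the $r^2$ dependency count and the factor $2e$ from LLL, and this pins down the admissible range. A sharper tail estimate or a more careful bookkeeping of the dependency graph might push the threshold slightly lower, but the shape of the argument would not change.
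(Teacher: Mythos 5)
Your proposal is correct and follows essentially the same route as the paper: a uniform random bipartition, the Chernoff--Hoeffding bound $\pr{A_v}\le 2e^{-2(0.39)^2r}$, and the symmetric Lov\'asz Local Lemma with dependency degree at most $r^2$, verified numerically at $r=28$. The only cosmetic difference is that you bound the dependency neighbourhood by vertices sharing a neighbour rather than vertices at distance at most $2$, which changes nothing in the calculation.
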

            \begin{proof}
                This is a standard application of the Lov\'asz Local Lemma. Denote $\gamma = 0.11$ for the rest of the paper. For each vertex $v$, put it into $U_1$ randomly and independently
                with probability $1/2$. Let $E_v$ be the event that $v$ does not satisfy the statement of the lemma. By the standard Chernoff bounds the probability of this event is at most $2e^{-2\left(\frac{1}{2}-\gamma \right)^2 r}$. Two events $E_v$ and $E_u$ are adjacent in the dependency graph if $u$ and $v$ are at distance at most 2 from each other,%!!
                and otherwise they are independent. Hence, each event has degree at most $\Delta=r^2$ in the dependency graph. Then for $\gamma = 0.11$ and $r \geq 28$, the condition
                $$(\Delta+1)\, e \, \pr{E_v} \leq (r^2+1) \cdot 2 e^{1-2\left(\frac{1}{2}-\gamma \right)^2 r} <1,$$
                is satisfied. Therefore, by the Local Lemma, with positive probability no event $E_v$ occurs.
            \end{proof}

            To use such a partition, we need an estimate on the number of edges spanned by subsets of vertices of $G_{n,\,r}$. Similar results have appeared e.g. in \cite{benshimon}, but for our purposes we need a more explicit dependence on the degree $r$.
            To prove the estimate, we work in the \emph{pairing (configuration) model} for $r$-regular graphs. For $rn$ even, we take a set of $rn$ points partitioned into $n$ cells $v_1,\, v_2,\, \dots v_n$, each cell containing $r$ points. A perfect matching (or \emph{pairing}) $P$ induces a multigraph $G(P)$ in which the cells are regarded as vertices and pairs in $P$ as edges. For fixed degree $r$ and $P$ chosen uniformly from the set of pairings $P_{n,\, r}$, $G(P)$ is a simple graph with probability bounded away from zero, and each simple graph occurs with equal probability. It is known (see, e.g., \cite{wormald}) that if an event holds whp in $G(P)$, then holds it holds whp even on the condition that $G(P)$ is a simple graph, and therefore it holds whp in $G_{n,\,r}$.

            \begin{lemma}
                Let $r\ge 3$ be a fixed integer. Let $P$ be a pairing selected uniformly from $P_{n,\,r}$. If $E_0 \subset [n]^{(2)}$ is a fixed set of $m\leq \frac{nr}{4}$ pairs of vertices from $n$, then
                $$\pr{E_0 \subset E(G(P))} \leq 2\left(\frac{2r}{n} \right)^m.$$
            \end{lemma}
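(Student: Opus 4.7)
The plan is to work directly in the pairing (configuration) model and bound the number of pairings $P$ for which every edge of $E_0$ is realized by $G(P)$. The total count $|P_{n,r}| = (nr-1)!!$ is standard, so the task reduces to estimating the number of ``good'' pairings.

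To build such a pairing, I would first specify, for each edge $\{u,v\} \in E_0$, one point in cell $u$ and one point in cell $v$ which $P$ pairs together, and then complete $P$ arbitrarily on the remaining $nr - 2m$ points. Processing the edges of $E_0$ in any fixed order $e_1, \dots, e_m$, at step $i$ there are at most $r$ unused points available in each of the two relevant cells, so at most $r^2$ choices; this gives $r^{2m}$ as an upper bound on the number of valid systems of $m$ vertex-disjoint designated point-pairs. (Even when $E_0$ has vertices of high degree, the factor $r$ per endpoint remains a valid upper bound, since $\prod_v \frac{r!}{(r-d_v)!} \leq \prod_v r^{d_v} = r^{2m}$, where $d_v$ is the degree of $v$ in $E_0$.) Once such a system is fixed, the remaining $nr - 2m$ points can be paired in $(nr - 2m - 1)!!$ ways, so
\[
\pr{E_0 \subset E(G(P))} \;\leq\; \frac{r^{2m}\,(nr - 2m - 1)!!}{(nr-1)!!} \;=\; \frac{r^{2m}}{\prod_{i=1}^{m}\bigl(nr - 2i + 1\bigr)}.
\]

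The hypothesis $m \leq nr/4$ now enters through the denominator: each factor satisfies $nr - 2i + 1 \geq nr - 2m + 1 \geq nr/2$, so the denominator is at least $(nr/2)^m$, yielding a probability bound of $(2r/n)^m$, which is in fact a factor of $2$ sharper than the stated inequality (the extra $2$ in the lemma merely leaves slack). I do not foresee a serious obstacle: the entire argument is a direct counting exercise in the pairing model, and the only minor subtlety is justifying the $r^{2m}$ bound when edges of $E_0$ share endpoints, which is handled by the per-endpoint bookkeeping noted above.
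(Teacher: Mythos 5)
Your proof is correct and follows the same counting strategy as the paper: designate a realizing point-pair for each edge of $E_0$ (at most $r^{2m}$ ways, with the same per-endpoint justification when edges share vertices) and complete the pairing arbitrarily on the remaining $nr-2m$ points. The only difference is the final estimate: the paper evaluates the ratio of factorials via Stirling's formula with a $(1+o(1))$ manipulation, whereas your exact telescoping $\frac{(nr-2m-1)!!}{(nr-1)!!} = \prod_{i=1}^{m}\left(nr-2i+1\right)^{-1}$ combined with the termwise bound $nr-2i+1 \geq nr/2$ (valid since $m \leq nr/4$) is cleaner, fully non-asymptotic, and even saves the factor of $2$ in the stated bound.
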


            \begin{proof}
            The total number of pairings $P$ is $\frac{(nr)!}{\left(\frac{nr}{2}\right)!2^{\frac{nr}{2}}}$. In order to bound from above the number of pairings $P$ inducing $E_0$, first for each edge $e=(u,v)\in E_0$, choose a point in the cell of $u$ and a point in the cell of $v$ in at most $r^2$ ways, the total number of such choices is then at most $r^{2m}$. The remaining $rn-2m$ points can be paired in
            $\frac{(nr-2m)!}{\left(\frac{nr}{2}-m\right)!2^{\frac{nr}{2}-m}}$ ways. Altogether, using Stirling's formula, the probability of getting $E_0$ is at most

                \begin{align*}
                    &\pr{E_0 \subset E(G(P))} \leq r^{2m}\cdot \frac{\left(nr -2m\right)!\left(\frac{nr}{2}\right)!}{\left(nr \right)!\left(\frac{nr}{2}-m\right)!2^{-m}} \\
                                    =&(1+o(1)) r^{2m}\cdot \left(\frac{nr-2m}{nr} \right)^{nr} \left(\frac{nr-2m}{e} \right)^{-2m} \left(\frac{nr}{nr-2m} \right)^{\frac{nr}{2}} \left( \frac{nr-2m}{e} \right)^m \\
                                    =&(1+o(1)) \left(1 - \frac{2m}{nr}\right)^{\frac{nr}{2}} \left( \frac{er^2}{nr-2m} \right)^m
                                    \leq 2\left( \frac{r}{n-\frac{2m}{r}} \right)^m \leq 2 \left( \frac{2r}{n} \right)^m.
                \end{align*}
             Here we used that since $1-x \leq e^{-x}$, then $(1-\frac{2m}{nr})^{\frac{nr}{2}} \leq e^{-m}$ and that $\frac{2m}{r} \leq \frac{n}{2}$.
            \end{proof}

        \begin{lemma}  \label{density}
            Let $P$ be a random element of $P_{n,\,r}$, and $G(P)$ be the corresponding $r$-regular multigraph on $[n]$. We obtain its maximal simple subgraph $\widetilde{G}(P)$ by deleting the loops and identifying the parallel edges of $G(P)$. 
            %The graph $\widetilde{G}(P)$ satisfies the following.
            \vspace{-20pt}
            \begin{enumerate} \itemsep-3pt
                \item Assume that $\gamma^{\prime} r \geq 3$. Then there is an absolute constant $\alpha > 0$ such that whp all vertex sets $S \subset [n]$ of order up to $\alpha n$ span fewer than $\frac{|S|\gamma^{\prime} r}{2}$ edges in $\widetilde{G}(P)$.
                \item There is an absolute constant $\beta > 0$ such that whp all vertex sets $S \subset [n]$ of order up to $\frac{\beta n}{r}$ span fewer than $3|S|$ edges in $\widetilde{G}(P)$.
            \end{enumerate}
        \end{lemma}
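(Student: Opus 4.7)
The plan is a first-moment / union-bound argument in the pairing model. For any fixed $S\subset[n]$ of size $s$ and any set $F$ of $k$ distinct unordered vertex pairs contained in $S$, the preceding lemma gives $\pr{F\subseteq E(G(P))}\le 2(2r/n)^{k}$, so the expected number of sets of size $s$ spanning at least $k$ edges in $G(P)$ is at most
$$X_{s}(k):=2\binom{n}{s}\binom{\binom{s}{2}}{k}\left(\frac{2r}{n}\right)^{k}.$$
Since $E(\widetilde G(P))\subseteq E(G(P))$ (removing loops and merging parallel edges only decreases the edge count on any vertex set), showing $\sum_{s}X_{s}(k_{s})=o(1)$ for the relevant threshold $k_{s}$ suffices by Markov's inequality: whp no bad $S$ exists in $G(P)$, and hence in $G_{n,r}$ via the pairing-model transfer recalled before the previous lemma.

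For part (i) take $k_{s}=\lceil s\gamma'r/2\rceil$; only $s\ge \gamma'r+1$ contributes (otherwise $\binom{\binom{s}{2}}{k_{s}}=0$). Using $\binom{n}{s}\le(en/s)^{s}$ and the bound $\binom{\binom{s}{2}}{k_{s}}\le(es/(\gamma'r))^{k_{s}}$ (which follows from $k_{s}\ge s\gamma'r/2$ together with $\binom{s}{2}\le s^{2}/2$), the product $\binom{\binom{s}{2}}{k_{s}}(2r/n)^{k_{s}}$ collapses to $(2es/(\gamma'n))^{k_{s}}$, and writing $y=s/n$ we obtain
$$X_{s}\le 2\left[\frac{e}{y}\left(\frac{2ey}{\gamma'}\right)^{\gamma'r/2}\right]^{s}.$$
Because $\gamma'r/2\ge 3/2>1$, the bracket is an increasing function of $y$ vanishing as $y\to 0$; the worst case for us is $\gamma'r=3$, and a direct computation shows that $\alpha=(\gamma')^{3}/(32e^{5})$ makes the bracket at most $1/2$ on $(0,\alpha]$. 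Summing then gives $o(1)$: for $s\le\log n$ the factor $y^{\gamma'r/2-1}$ inside the bracket provides polynomial-in-$n$ decay making each term $o(1/n)$, and for $s\ge\log n$ the geometric bound $(1/2)^{s}$ sums to $O(1/n)$.

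Part (ii) is analogous, with $k_{s}=3s$ (forcing $s\ge 7$) and $\binom{\binom{s}{2}}{3s}\le(es/6)^{3s}$; the same manipulation yields (with $z=s/n$)
$$X_{s}\le 2\left[\frac{e^{4}r^{3}z^{2}}{27}\right]^{s}.$$
Restricting to $z\le\beta/r$ bounds the bracket by $e^{4}\beta^{2}r/27$, so for our fixed $r$ taking $\beta$ small enough makes the bracket at most $1/2$ and $\sum X_{s}=o(1)$ by the same small/large-$s$ split. The main technical obstacle is that the naive estimate $X_{s}\le(1/2)^{s}$ sums only to $O(1)$, so to get $o(1)$ one must genuinely exploit the extra $n$-decay in the bracket for $s=O(1)$; the mild hypothesis $k_{s}\le rn/4$ needed to invoke the previous lemma holds trivially in both regimes, since $s\le\alpha n$ and $s\le\beta n/r$ respectively.
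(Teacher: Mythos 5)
Your proof is correct and follows essentially the same route as the paper's: a union bound over sets $S$ and over candidate edge sets $E_0\subset S^{(2)}$ using the preceding pairing-model lemma, the same collapse of $\binom{n}{s}\binom{\binom{s}{2}}{k}(2r/n)^k$ to a bracket of the form $\frac{e}{y}\bigl(\frac{2ey}{\gamma'}\bigr)^{\gamma'r/2}$ raised to the power $s$, and the same small-$s$/large-$s$ split to convert the geometric bound into $o(1)$ (the paper splits at $n^{1/4}$ rather than $\log n$, which is immaterial). Your explicit choices of $\alpha$ and $\beta$, and your remark that $\beta$ as obtained depends on $r$, are consistent with (indeed slightly more candid than) the paper's own treatment.
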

            \begin{proof}

                Denote the event that $\widetilde{G}(P)[S]$ contains at least $\frac{|S|d}{2}$ edges by $B_S$. Fix the order $|S| = s$. Since $\widetilde{G}(P)$ is a subgraph of $G(P)$, we can apply the previous lemma to each subset $E_0 \subset S^{(2)}$ of $\frac{sd}{2}$ edges to get
                $$\pr{B_S}
                %\leq \pr{ \bigvee_{\substack{E_0 \subset S^{(2)}\\ |E_0| = \frac{sd}{2}} } E_0 \subset E\left(\widetilde{G}(P) \right) }
                \leq 2 \binom{s^2 /2}{ sd/2}\left(\frac{2r}{n} \right)^{ sd /2} \leq
                    2\left(\frac{2ser}{ nd} \right)^{ s d/2}.$$

                Taking the union bound over all sets of vertices of order $s$ gives
                $$\pr{ \bigvee_{S \in [n]^{(s)} } B_S} \leq {n \choose s}\pr{B_S} \leq 2 \left[ \frac{ne}{s}\, \left(\frac{s}{n} \cdot \frac{2er}{d} \right)^{\frac{d}{2}} \right]^s.$$

                For (i) set $d = \gamma^{\prime }r \geq 3$ and choose $\alpha$ so that the term in square brackets is less than $\frac{1}{2}$ for $s = \alpha n$
                (note that this term is increasing in $s$). We split the range of $s$ into $\gamma^{\prime} r \leq s \leq n^{\frac{1}{4}}$ and $n^{\frac{1}{4}} < s \leq \alpha n$ to get
                $$\pr{ \bigvee_{S}B_S} \leq n^{\frac{1}{4}} \cdot O\left(n^{-\frac{3}{8}} \right) + \sum_{s \geq n^{\frac{1}{4}}} 2^{-s+1} = o(1),$$
                as required.

                For (ii), set $d = 6$. Take $\beta$ such that $\frac{s}{n} = \frac{\beta}{r}$ again makes the term in brackets at most  $\frac{1}{2}$. The same calculation gives the result.

            \end{proof}

            From the discussion above, conditional on the event that $G(P)$ is a simple graph (which is exactly $\widetilde{G}(P) = G(P)$), $G(P)$ satisfies the statement of Lemma \ref{density}. Therefore the same holds for the random regular graph $G_{n,\,r}$.
            We can now prove the main result of this section, $rvc(G_{n,\, r}) = O\left( \frac{\log n}{ \log r} \right)$ whp for $r \geq 28$.

            \begin{proof}[Proof of Theorem \ref{rvc}.]
                Let $G$ be a random $r$-regular graph, $\gamma = 0.11$. Use Lemma \ref{partition} to obtain a partition $V= U_1 \cup U_2$ such that each $v \in V$ has at least $\gamma r$ neighbours in each part.

                All statements about $G$ from now on will hold with high probability. In particular, we assume that $G$ satisfies Lemma \ref{density} with $\gamma^\prime = \frac{   \gamma}{1+\epsilon}$, where $\epsilon = 0.02$ is chosen so that $\frac{   \gamma r}{1 + \epsilon} >3$. We only need the extra $(1+\epsilon)^{-1}$ factor later, for Claim 3. Such edge distribution implies that each connected component of $G[U_i]$ contains at least $\alpha n$ vertices, where $\alpha$ is the constant from Lemma \ref{density}.

                \textbf{Claim 1.} We can find $W_i \subset V$ such that $W_i = O(1)$ and $G[U_i \cup W_i]$ is connected.

                For a set of vertices $A \subset V$, denote $\Gamma^j(A)=\{v \in V: d_G(v, A)\leq j\}$. %I kept the superscript-notation because later we use the subscript to specify the relevant graph
                It is well-known that a random regular graph has good expansion properties (see \cite{bollobas}), i.e.  there is a constant $\phi>0$ such that whp $|\Gamma(A)| \geq (1+\phi)|A|$ whenever $|A| \leq \frac{n}{2}$. Now suppose that $A$ has linear order, $|A| \geq \alpha n$, and take an integer $l > \frac{\log \alpha^{-1} - \log 2}{\log (1+\phi)} $. Iterating the expansion property gives that $|\Gamma^l(A)| > \frac{n}{2}$. To prove Claim 1, suppose $A$ and $B$ are vertex sets of two connected components of $G[U_i]$, each of order at least $\alpha n$. We just showed that  $\Gamma^l(A) \cap \Gamma^l(B) \neq \emptyset$, so there is a path of length at most $2l$ from $A$ to $B$ in $G$. Adding the vertices of this path to $W_i$ reduces the number of connected components by one, so repeating this step $\alpha^{-1}$ times ensures that $V_i = U_i \cup W_i$ spans a connected graph $G_i=G[V_i]$. Choose a large integer $a$ such that $|W_i| \leq a$ for all $n$ and $r$.
                The vertex sets $V_1$ and $V_2$ now satisfy $|V_1 \cap V_2| \leq 2a$, so we turn to the diameters of $G_1$ and $G_2$.

                 \textbf{Claim 2}. For $r \geq 112$ (so that $\gamma r \geq 12$), every $T \subset V_i$ of order at most $\frac{\beta n}{\gamma r^2 }$ satisfies $|\Gamma_{G_i}(T)| \geq \left(1+\frac{\gamma r}{12} \right)|T|.$

                Suppose $T$ does not satisfy the claim, and let $S=\Gamma_{G_i}(T)$. Since all the edges in $G_i$ with an endpoint in $T$ lie in $G_i[S]$, we get that $S$ spans at least
                $$\frac{\gamma r|T|}{2} \geq \frac{\gamma r|S|}{2\left(1+\frac{\gamma r}{12} \right)} \geq \frac{3\gamma r|S|}{\gamma r} = 3 |S| $$
                edges. Note that by the hypothesis $|S| \leq \left( 1+\frac{\gamma r}{12} \right)\cdot \frac{\beta n}{\gamma r^2} < \frac{\beta n}{r}$. Hence we can deduce from Lemma \ref{density} (ii) that $S$ spans fewer than $3|S|$ edges, which is a contradiction.

                \textbf{Claim 3}. Let $\alpha$ be the constant from Lemma \ref{density} (i) and $\epsilon>0$ as above. Every subset $T \subset V_i$ of order at most $\frac{\alpha n}{1+\epsilon}$ satisfies $|\Gamma_{G_i}(T)| \geq (1+\epsilon)|T|.$

                Assume that $T$ does not expand, and use Lemma \ref{density} for $S=\Gamma_{G_i}(T)$, $\gamma^{\prime}= \frac{\gamma}{1+\epsilon} > \frac{3}{r}$. Since all the edges of $G_i$ with an endpoint in $T$ lie in $G_i[S]$, we get that $S$ spans at least
                $$\frac{\gamma r|T|}{2} \geq \frac{\gamma r|S|}{2(1+\epsilon )} = \frac{\gamma^{\prime} r|S|}{2} $$
                edges. This contradicts statement (i) of Lemma \ref{density}.

            For $r \geq 112$, Claim 2 implies that starting from any vertex $v \in V_i$, we can expand in $G_i$ to a set of order $\frac{\beta n}{\gamma r^2}$ in $\frac{c_1\log n}{\log r}$ steps, where $c_1$ is a constant independent of $r$ and $n$. Further $O(\log r)$ steps give a set of order $\frac{\alpha n}{1+\epsilon}$, by Claim 3. For $r<112$, we use directly Claim 3 $O(\log n)$ times (thus avoiding Claim 2) to expand to a set of order $\frac{\alpha n}{1+\epsilon}$. In this range, $\log r < \log 112$ and hence $O(\log n) = O \left(\frac{\log n}{\log r} \right).$

            Denote $k = \frac{c\log n}{\log r}$, where $c>c_1$ is sufficiently large for the described expansion to go through. Suppose the diameter of $G_i$ is larger than $\frac{4k}{\alpha}$, and take $x_0$ and $x_R$ such that the shortest path $x_0 x_1 \dots x_R$ is longer than $\frac{4k}{\alpha}$ (such a path exists since $G_i$ is connected). Then we can use the procedure above to expand from vertices $x_0,\, x_{3k},\, x_{6k} \dots $ in $k$ steps to get $\frac{4}{3\alpha}$ \emph{disjoint} (by the choice of the path) neighbourhoods, each of order $\frac{\alpha n}{1+\epsilon}$, which is a contradiction. Thus applying Lemma \ref{rvcmain} to subsets $V_1$ and $V_2$ gives $rvc(G) \leq \frac{9c \log n}{\alpha \log r},$
            as required.
            \end{proof}
        \begin{remark}
            The constants $\gamma=0.11$ and $\epsilon=0.02$ are chosen so that Theorem \ref{rvc} holds for $r\geq 28$. If we are only interested in large values of $r$, we can set $\gamma$ arbitrarily close to $ 0.5$ and, say, $\epsilon =0.25$
        \end{remark}

        \section*{Concluding remarks}

In this paper we proposed a simple approach to studying rainbow connectivity and rainbow vertex connectivity in graphs. Using it we gave a unified proof of several known results, as well as of some new ones.  Two
obvious interesting questions which remain open are to show that rainbow edge connectivity and rainbow vertex connectivity of random 3-regular graphs on $n$ vertices are logarithmic in $n$.

\medskip

\noindent{\bf Acknowledgement.} Part of this work was carried out when the third author visited
Tel Aviv University, Israel. He would like to thank the School of Mathematical Sciences of Tel Aviv University for hospitality and for creating a stimulating research environment.

We would like to thank the referees for helpful remarks.

\end{document}